\newtheorem{theorem}{Theorem}
\newtheorem{lemma}[theorem]{Lemma}
\newtheorem*{conjecture*}{Conjecture}
\theoremstyle{definition}
\newtheorem{definition}{Definition}[section]
\DeclareMathOperator{\ucd}{\operatorname{ucd}}           
\newcommand{\modulo}[1]{~(\mathrm{mod} \ #1)}
\DeclareMathOperator{\Deck}{\mathrm{Deck}}
\title{Reconstructing edge-deleted unicyclic graphs}
\author{Anthony E. Pizzimenti \thanks{Supported by NSF the NSF GRFP (\#2024370283).}}
\author{Umarkhon Rakhimov \thanks{Partially supported by NSF grant DMS-2408877.}}
\affil{Department of Mathematical Sciences, George Mason University}
\begin{document}
\stepcounter{footnote}
\stepcounter{footnote}
\maketitle

\begin{abstract}\noindent\sloppy
    The Harary reconstruction conjecture states that any graph with more than four edges can be uniquely reconstructed from its set of maximal edge-deleted subgraphs \cite{harary1965reconstruction}. In 1977, M\"uller verified the conjecture for graphs with $n$ vertices and $n \log_2(n)$ edges, improving on Lov\'az's bound of $\nicefrac{(n^2-n)}{4}$ \cite{Muller_1977}. Here, we show that the reconstruction conjecture holds for graphs which have exactly one cycle and three non-isomorphic subtrees.
\end{abstract}

\section{Introduction}
    A \emph{graph} $G$ is a pair $G=(V,E)$ of \emph{vertices} $V$ and \emph{edges} $E \subseteq V \times V$. A graph is \emph{simple} when $(u,v)=(v,u)$ for any edge $(u,v)$, and no edge is of the form $(u,u)$. A (length-$n$) \emph{path} in $G$ is a sequence of edges $p=(e_1, \dots, e_n)$ such that: $e_i$ and $e_j$ share a vertex if and only if $j = (i+1)\modulo n$; any vertex is shared by exactly two edges in $p$. A graph $G$ is \emph{connected} if and only if there exists a path beginning at $u$ and terminating at $v$ for any pair of distinct vertices $u\neq v$. A \emph{cycle} is a path which begins and ends at the same vertex, and a graph is \emph{unicyclic} when it has exactly one cycle, as in Figure \ref{figure:unicyclic-graph-example}; a graph without cycles is called \emph{forest}, and a connected forest graph is called a \emph{tree}. For two graphs $G$ and $H$, a bijective map $\varphi: V_G \to V_H$ where $$ (u, v) \in E_G \iff (\varphi(u), \varphi(v)) \in E_H $$ for all pairs of distinct vertices $u$ and $v$ is called a \emph{graph isomorphism}; if such a $\varphi$ exists, $G$ and $H$ are \emph{isomorphic}. From here, all graphs are finite and simple.

    In 1965, Harary posited in \cite{harary1965reconstruction} that

    \begin{conjecture*}[Harary]
        For $G = (V,E)$ a graph with $|E|>4$, let $G_i$ denote the maximal subgraph of $G$ with the $i$\textsuperscript{th} edge deleted, and $\Deck(G) \coloneqq \{G_i\}_{i=1}^m$ the set of maximal edge-deleted subgraphs of $G$. If $\Deck(G) = \Deck(H)$ for some graph $H$, then $G \cong H$.
    \end{conjecture*}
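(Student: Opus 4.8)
The statement is the full Harary edge-reconstruction conjecture, so any honest plan must pass through the standard reconstruction calculus and then confront the single regime in which it is known to fail. My first step is to extract from $\Deck(G)$ every invariant that is forced. Each of the $m$ cards $G_i$ has $m-1$ edges, so $m = |E|$ is immediately recoverable and any $H$ with $\Deck(H)=\Deck(G)$ has $|E(H)| = m$. More importantly, I would establish the edge analogue of Kelly's lemma: for a fixed graph $F$ with fewer than $m$ edges (write $e(F)$ for its edge count and $s(F,G)$ for the number of subgraphs of $G$ isomorphic to $F$), a copy of $F$ survives in the card $G_i$ precisely when the deleted edge lies outside it, i.e. in exactly $m - e(F)$ of the cards. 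Double counting then gives
\[ s(F,G) \;=\; \frac{1}{\,m - e(F)\,}\sum_{i=1}^{m} s(F,G_i), \]
whose right-hand side depends only on $\Deck(G)$. Hence $s(F,G) = s(F,H)$ for every $F$ with fewer than $m$ edges.

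The second step converts this agreement of all proper subgraph counts into an isomorphism. Fix a common vertex set $V$ and regard $G,H$ as edge subsets of $\binom{V}{2}$; since both have exactly $m$ edges, the only $m$-edge subgraph of $H$ is $H$ itself, so $s(G,H) \ge 1$ already forces $G \cong H$. The whole conjecture therefore reduces to computing this one top-level count from the lower-level counts we have reconstructed. The tool is the Lov\'asz--Nash-Williams counting identity, which rewrites $s(G,H)$ as a signed sum over the subgraphs of $G$ (one contribution per subset of $E(G)$) corrected by a term controlled by the relabellings of $V$.

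The third step is to bound that identity away from zero. Its dominant contribution has order $2^{m}$ while the correction is of order $n!$, so the sum is forced to be nonzero --- and $G$ is reconstructed --- as soon as $2^{m-1} > n!$, which is M\"uller's threshold $m \gtrsim n\log_2 n$; the coarser comparison yields Lov\'asz's bound $m > \tfrac14(n^2-n)$. I would try to sharpen this by not treating the correction term as a worst case: exploiting girth, the (reconstructible) degree sequence, and sparsity to show that large edge-overlaps between $G$ and its relabellings are rare, thereby shrinking the error and lowering the edge threshold below $n\log_2 n$.

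The hard part, and the reason the conjecture is still open, is precisely the sparse regime $m = o(n\log n)$: there the $2^{m}$ main term no longer dominates the $n!$ relabellings, the counting identity can vanish, and no purely enumerative argument is known to survive. I expect the decisive leverage here to be structural rather than enumerative --- for a sparse graph the deck imposes a rigid skeleton that can often be reassembled directly, sidestepping the failed global count. This is exactly where a restricted class such as unicyclic graphs becomes tractable: the single cycle together with its hanging subtrees forms such a skeleton, and I would attack the general conjecture by first settling the structured sparse families and only then seeking a uniform argument that interpolates to the dense counting regime.
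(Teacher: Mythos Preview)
The statement you were asked to prove is presented in the paper as an \emph{open conjecture}, not as a theorem; the paper contains no proof of it. What the paper actually establishes is a restricted special case (its Theorem): edge-reconstructibility for unicyclic graphs $G$ with $\ucd(G)\ge 5$ and at least three unique branches rooted at distinct cycle vertices. That argument is purely structural --- it reads off the branches from the unicyclic components of the cards, uses pigeonhole to find two cards whose unicyclic pieces together cover all branches, and reassembles $G$ by walking the cycle between the roots of the unique branches.

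Your proposal is an accurate survey of the known global machinery (edge-Kelly, the Lov\'asz--M\"uller counting identity, the $2^{m-1}>n!$ threshold) and you are candid that this machinery breaks down in the sparse regime $m=o(n\log n)$. But candor is not a proof: your third step explicitly concedes that the enumerative argument fails below the M\"uller line, and your fourth step proposes to ``attack the general conjecture by first settling the structured sparse families'' --- which is a research programme, not an argument. So the genuine gap is simply that you have not proved the statement, nor could you have: it remains open, and the paper neither claims nor supplies a proof of it. If the intended target was the paper's actual theorem rather than the conjecture, then your approach is orthogonal to the paper's: the paper never invokes Kelly's lemma or subgraph counting at all, relying instead on the direct combinatorial rigidity of the cycle-plus-branches decomposition.
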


    Here, we show the conjecture holds for the class $\mathscr U$ of unicyclic graphs with at least three non-isomorphic subtrees.
    
\section{Edge reconstruction}
    \subsection{Lemmas}
        Let $G$ be a unicyclic graph and $C(G) \subseteq G$ the cycle in $G$. We can present $G$ as $C(G) \cup \mathcal T$, where $\mathcal T = \{T_1, \dots, T_p\}$ is the set of mutually disjoint subtrees of $G$, each sharing exactly one vertex with $C(G)$; the $T_k$ are the \emph{trunks} of $G$.

        A \emph{branch} $B$ of $G$ is a subgraph of a trunk $T$ such that $B$
    \begin{enumerate}[topsep=0pt, itemsep=0pt, label=(\roman*)]
        \item has at least one edge;
        \item shares exactly one vertex $v$ with $C(G)$;
        \item is connected to $v$ by only one edge;
        \item is \emph{not} a proper subgraph of any other branch $B'$.
    \end{enumerate}
    
    \begin{figure}
    \centering
    \begin{subfigure}[t]{0.49\textwidth}
        \centering
        \begin{tikzpicture}[scale=1,auto=left,every node/.style={circle,draw=black,fill=white,thick}]
          \node (n0) at (0,0) {};
          \foreach \angle/\name in {0/n1, 120/n2, 240/n3}
            \node (\name) at (\angle:1cm) {};
        
          \draw (n0) -- (n1) -- (n2) -- (n3) -- (n0);
        
        
          \node (r1) at (3.5, 0.5) {};
          \node (r2) at (4.5, 1) {};
          \node (r3) at (4.5, 0) {};
          \draw[red, thick] (n1) -- (r1);
          \draw[red, thick] (r1) -- (r2);
          \draw[red, thick] (r1) -- (r3);
        
          \node (g1) at (0.5, 1.5) {};
          \draw[green, thick] (n1) -- (g1);
        
          \node (b1) at (2, -1) {};
          \node (b2) at (3, -0.8) {};
          \node (b3) at (4.5, -1) {};
          \draw[blue, thick] (n1) -- (b1);
          \draw[blue, thick] (b1) -- (b2);
          \draw[blue, thick] (b2) -- (b3);
        
        \end{tikzpicture}
        \caption{A unicyclic graph with three distinct branches (red, green, blue). The green and blue branches are unique.}
        \label{figure:unicyclic-graph-example}
    \end{subfigure}%
    \begin{subfigure}[t]{0.49\textwidth}
        \centering
        \begin{tikzpicture}[scale=1.5,auto=left,every node/.style={circle,draw=black,fill=white,thick}]
          \node (v1) at (4,0) {};
          \node (v2) at (5,0) {};
          \node (v3) at (4.5,{sqrt(3)/2}) {};
        
          \draw (v1) -- (v2);
          \draw (v2) -- (v3);
          \draw (v3) -- (v1);

          \node (v4) at (3,0) {}; 
          \draw[green, thick] (v1) -- (v4);
        
          \node (v5) at (6.5,0) {};
          \node (v6) at (6.5,{sqrt(3)/2}) {};
          \node (v7) at (5.5,.5) {};
          \node (v8) at (7,.5) {};
        
          \draw[green, thick] (v5) -- (v2);
          \draw[green, thick] (v3) -- (v6);
          \draw[green, thick] (v3) -- (v7);
          \draw  (v7) -- (v8);
        \end{tikzpicture}
        \caption{A unicyclic graph $G$ with $\ucd(G) = 4$.}
    \end{subfigure}
    \caption{}
    \label{figure:double-figure}
\end{figure}
    
    \begin{definition}
        For a unicyclic graph $G$ define a function: $$\ucd(G) := \sum_{\mathclap{d_G(v)\geq 3,\,v\in V(C(G))}} (d_G(v)-2),$$
        which is the quantity of branches of graph $G$.
    \end{definition}
    
    \begin{lemma}\label{lemma:unicyclic-graph-categories}
        Suppose that $G=(V,E)$ is unicyclic, and set $p \coloneqq ucd(G)$. Then $G_i = (V,E \setminus \{e_i\})$ for $e_i \in E(G)$ is one of the following:
        \begin{enumerate}[itemsep=0pt, topsep=0pt]
        \item a tree;
        \item a graph with two connected components, one of which is unicyclic graph with $p$ branches, and the other a forest;
        \item graph with two connected components, one of which is unicyclic graph with $p-1$ branches, and the other a forest
        \end{enumerate}
        for any $i=1,\ldots,m$, where $m=|E|$.
    \end{lemma}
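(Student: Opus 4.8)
The plan is to classify the deleted edge $e_i$ by whether it lies on the cycle $C(G)$, exploiting the defining feature of a unicyclic graph: it has exactly one cycle, so every edge not on $C(G)$ lies on no cycle at all and is therefore a bridge. Once this is in hand, all three outcomes drop out of a short case analysis, and the only genuinely delicate point is counting the branches of the unicyclic component.

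First I would take $e_i \in E(C(G))$. Removing one edge of $C(G)$ cannot disconnect $G$: its two endpoints stay joined by the remaining edges of $C(G)$ (a cycle in a simple graph has length at least $3$), and every trunk is still attached to $C(G)$; moreover $G_i$ is now acyclic, since $C(G)$ was the only cycle of $G$. So $G_i$ is connected and acyclic, i.e.\ a tree, which is outcome (1).

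Next suppose $e_i \notin E(C(G))$, so $e_i$ lies in some trunk $T_k$ and, by the remark above, is a bridge. Then $G_i$ has exactly two connected components, $A \supseteq C(G)$ and $B$. The component $A$ contains $C(G)$ and no other cycle — a second cycle of $A$ would be a second cycle of $G$ — so $A$ is unicyclic, while $B$ has no cycle and is a forest. To decide between (2) and (3) I would use that each trunk decomposes edge-disjointly into its branches, where a branch consists of one edge meeting $C(G)$ together with the subtree hanging from that edge; consequently $e_i$ is either \textbf{(a)} the single edge joining some branch to its cycle vertex $v$, or \textbf{(b)} an interior edge of some branch $B_0$. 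In case (a) one has $d_{G_i}(v) = d_G(v) - 1$ and $d_{G_i}(u) = d_G(u)$ for every other cycle vertex $u$; checking the two possibilities $d_G(v) = 3$ (the vertex $v$ leaves the sum defining $\ucd$, losing its contribution of $1$) and $d_G(v) > 3$ (its contribution drops by exactly $1$) shows $\ucd(A) = p - 1$, so $A$ is unicyclic with $p-1$ branches and $B$ is the detached branch (a tree, hence a forest): outcome (3). In case (b) no cycle vertex changes degree, since an interior edge of a branch is incident to no vertex of $C(G)$, so $\ucd(A) = \ucd(G) = p$; concretely, the part of $B_0$ still attached to $v$ is again a branch of $A$, having an edge, meeting $C(G)$ only at $v$ through a single edge, and being maximal in $A$, whereas the part cut off lands in $B$ as a subtree: outcome (2). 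Since (a) and (b) exhaust the non-cycle edges, outcomes (1)--(3) are exhaustive.

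I expect the branch bookkeeping in cases (a) and (b) to be the main obstacle. One has to use that branches are defined by \emph{maximality}, so that deleting an interior edge genuinely leaves a full branch attached to $C(G)$ rather than a proper sub-branch, and one has to verify that deleting a connecting edge drops $\ucd$ by exactly $1$ — not by the whole contribution $d_G(v) - 2$ of the vertex — which is why the case $d_G(v) = 3$ must be separated out. Everything else (connectivity after deleting a cycle edge, the bridge property of non-cycle edges, acyclicity of the small component) is routine once the unicyclicity of $G$ has been invoked.
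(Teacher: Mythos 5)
Your proof is correct and follows essentially the same route as the paper: classify $e_i$ by whether it lies on $C(G)$, then split the non-cycle (bridge) edges according to whether an endpoint lies on the cycle, obtaining outcomes (1)--(3) respectively. Your explicit $\ucd$ bookkeeping (separating $d_G(v)=3$ from $d_G(v)>3$ and checking the leftover piece is still a maximal branch) is in fact more careful than the paper's informal assertion that the branch count drops by one or stays fixed.
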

    
    \begin{proof}
    Let  $e_i$ be a edge in $G$. If $e_i \in E(C(G))$, deleting it produces a connected graph $G'$ on $n = |V(G)|$ vertices and $n-1$ edges; thus, $G'$ is a tree. If $e_i \notin V(C(G))$, there are two cases:
    
    \begin{description}[leftmargin=0.6in, style=nextline, after=\vspace{-2em}]
        \item[Case 1.] $e_i=(v,v')$, where $v' \,\in  V(G) -V(C(G)) $, $v\,\in V(C(G))$. Notice that  deleting the edge $e_i$ here increases the number of connected components by one. Otherwise there would exists two paths to vertex  $v'$ in graph $G$, consequently the path would belong to some cycle and because there is one cycle and $v'\in  V(G) -V(C(G))$, we get a contradiction. Since we did not delete any vertex or edge of $C(G)$, there exists connected subgraph  $G'$ of graph $G_i = (V,E\backslash e_i)$ such that $G'$ is unicyclic graph. Since by deleting $e_i$ we decrease the number of branches by one we have $\ucd(G')=\ucd(G)-1$. Another connected component of  $G_i$ other than $G'$ will be forest, i.e. a tree.
        \item[Case 2.] $e_i=(v,v'),\,\, v,v' \in V(G)- V(C(G))$. Deleting the edge in this case will not affect $\ucd(G)$, but it will increase the number of connected components by one.  Otherwise there would exists a vertex such that has two paths to either  $v $ or $v'$, i.e. one or both belongs two one cycle. There is the contradiction. Since we did not delete edge or vertex of the cycle  $C(G)$, then there exists subgraph $G'$ of graph $G_i = (V,E\backslash e_i)$, such that $G'$ unicycle.  As we mentioned before we did not change number of branches of $G$ by deleting the edge $e_i$, that's why $\ucd(G')=\ucd(G)$. Another connected component of  $G_i$ other than $G'$ will be forest, i.e. a tree.
    \end{description}
    \end{proof}
    
    We introduce two definitions which allow us to differentiate between branches which, up to a choice of root, may have the same structure; detailed descriptions for rooted trees and isomorphisms between them can be found in \cite{gross2018graph}.
    
    \begin{definition}[Isomorphic as rooted trees, unique branch]
        Rooted trees $T_1$ and $T_2$ with roots $v_1,\,v_2$ are \emph{isomorphic as rooted trees} if there is a graph isomorphism between $T_1$ and $T_2$ which maps $v_1$ to $v_2$. A branch $B$ of a unicyclic graph $G$ is called \textit{unique} when, for any other branch $B'$ disjoint from $B$, $B$ and $B'$ are not isomorphic as rooted trees.
    \end{definition}
    
    \begin{lemma} \label{lemma:identifiable-branches}
        Any unique branch of the unicyclic graph $G=(V,E)$, where $C(G)$ has at least length five and $\ucd(G)\geq 5$, can be reconstructed from the $\Deck(G)$.
    \end{lemma}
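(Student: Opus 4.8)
The plan is to locate, inside $\Deck(G)$, the card produced by deleting the edge joining $B$ to $C(G)$; from that card one reads off $B$ as an unrooted tree and then recovers its root, hence $B$ as a rooted tree. Before that I would extract the invariants $\Deck(G)$ determines: every card has $|V(G)|$ vertices, so $n:=|V(G)|$ and $m:=|E(G)|=n$ are known and $G$ is unicyclic; by Lemma~\ref{lemma:unicyclic-graph-categories} a card is connected exactly when the deleted edge lies on $C(G)$, in which case it is a tree, so the number of tree cards equals the cycle length $\ell:=|C(G)|\geq 5$. The remaining cards are disconnected, of type~(2) or~(3) of Lemma~\ref{lemma:unicyclic-graph-categories}; each has a unicyclic component carrying a copy of $C(G)$, with $p:=\ucd(G)$ branches in type~(2) and $p-1$ in type~(3). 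If every branch of $G$ is a single pendant edge then, since $B$ is unique, all branches sit at one cycle vertex, $G$ is a cycle with $p$ pendants at a point, and the reconstruction of $B$ is immediate; otherwise some branch has an edge with both endpoints off $C(G)$, deleting which is a type-(2) move, so $p$ is the largest value of $\ucd(\,\cdot\,)$ among the unicyclic components of the disconnected cards. Hence $\ell$ and $p$ are known and the disconnected cards split cleanly into types~(2) and~(3).

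There are exactly $p$ type-(3) cards: for each branch $B_j$, with cycle vertex $v_j$ and connecting edge $(v_j,w_j)$, deleting $(v_j,w_j)$ gives the card $D_j=(U_j,B_j^-)$ with $B_j^-:=B_j-v_j$ the tree hanging below $w_j$ and $U_j:=G-(V(B_j)\setminus\{v_j\})$ the graph $G$ with $B_j$ pruned back to $v_j$. Writing $B=B_{j_0}$, the crux is to single out $D_{j_0}$ up to isomorphism, which recovers the unrooted tree $B^-$ together with the pruned graph $G-B$. To do this I would show $D_{j_0}$ is recognisable among the $D_j$: if $D_j\cong D_{j_0}$ then $B_j^-\cong B^-$ and $U_j\cong U_{j_0}$, and combining this with the uniqueness of $B$ and a rigidity fact about branches --- attaching a pendant edge at two vertices of a fixed tree yields isomorphic trees only when those vertices lie in a common orbit of its automorphism group, so a branch is determined up to rooted isomorphism by its underlying tree --- should force $B_j$ to be a copy of $B$ attached at the same cycle vertex, a ``twin'' contributing nothing new. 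This is the step I expect to be hardest, and it is where $\ucd(G),\ell\geq 5$ are used: enough branches, and a long enough cycle, that $G-B$ still exhibits $C(G)$ and all the other branches unambiguously.

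It then remains to root $B^-$: one must find the vertex $w_{j_0}$ of $B^-$ that was joined to the cycle, whereupon $B$ is $B^-$ with a pendant edge added at $w_{j_0}$, rooted at that new leaf. For this I would feed in the type-(2) cards coming from the edges of $B$: deleting an edge of $B^-$ peels off a rooted subtree of $B^-$ and leaves a unicyclic component that is $G-B$ with a shorter branch re-glued at $v$; matching these tree-components and unicyclic components against the already-known $B^-$ and $G-B$ pins down which vertex of $B^-$ is the root --- equivalently, it identifies the unique way of re-attaching $B^-$ to $G-B$ whose edge-deck is $\Deck(G)$. This last part is essentially bookkeeping once the recognition of $D_{j_0}$ in the previous paragraph is established.
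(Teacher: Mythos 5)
Your preliminary accounting (recovering $n$, $m$, the cycle length, and $p=\ucd(G)$ from the deck, and splitting the disconnected cards into types (2) and (3) of Lemma~\ref{lemma:unicyclic-graph-categories}) is fine, but the load-bearing step of your plan is missing, and you flag it yourself: you never give a criterion, checkable on the multiset $\Deck(G)$ without already knowing $B$, that singles out the type-(3) card $D_{j_0}$ among the $p$ type-(3) cards. The argument you sketch for recognisability --- ``if $D_j\cong D_{j_0}$ then $B_j$ must be a twin of $B$'' --- is circular for this purpose: it describes the isomorphism class of $D_{j_0}$ in terms of $B$, which is the very object you are trying to reconstruct. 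What is needed is a property of a card $(U,F)$ that identifies it as ``the one whose missing branch is the unique branch,'' and that is exactly where the content of the lemma lives. The paper avoids this matching problem entirely by never looking at the forest components: it collects the unicyclic components $U$ of the type-(3) cards, each of which is $G$ with one whole branch pruned and therefore still displays $B$ \emph{as a rooted branch} at a cycle vertex; since at least two of any three such components contain $B$, and $B$ is the sole member of its rooted-isomorphism class, intersecting the class partitions of two of them isolates $B$ with no card recognition and no re-rooting.

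Second, your route creates a re-rooting problem (recovering the attachment vertex $w_{j_0}$ of $B^-$) and then resolves it with a rigidity claim that is false. It is not true that ``a branch is determined up to rooted isomorphism by its underlying tree'': take $B^-$ to be the path $a\text{--}b\text{--}c\text{--}d$ with an extra pendant $e$ at $b$; attaching the cycle vertex at $a$ or at $d$ gives branches with isomorphic underlying trees that are \emph{not} isomorphic as rooted trees. Consequently $B_j^-\cong B^-$ and even $D_j\cong D_{j_0}$ do not force $B_j\cong B$ as rooted branches, so the ``twin'' conclusion does not follow; and the final step of re-attaching $B^-$ to $G-B$ using the type-(2) cards is another nontrivial matching problem (you must first decide which type-(2) cards arise from edges of $B$ rather than of other branches), not bookkeeping. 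Both gaps disappear if you work, as the paper does, with the rooted branches visible inside the unicyclic components rather than with the de-rooted forest components.
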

    
    \begin{proof}
        Via Lemma \ref{lemma:unicyclic-graph-categories}, some $G'\in \{G_i\}^m_{i=1}$ will have two components, one of which is unicyclic component $U$ with $\ucd(G)-1$  branches.  Denote the collection of  these unicyclic components for all $G' \in \{G_i\}^m_{i=1}$ by $\mathcal{U}$ like in Figure \ref{figure:deck}. From $\mathcal U$, choose any three graphs $U_1, U_2,$ and $U_3$. By the pigeonhole principle and the construction of $\mathcal U$, at least two of the $U_i$ contain the unique branch $B$. Without loss of generality, suppose that $U_1$ and $U_2$ are such that $B$ is a branch of both of them; let $\cong$ be an equivalence relation on the branches of $G$ such that $B_1 \cong B_2$ if and only if $B_1$ and $B_2$ are isomorphic as rooted trees. Then, let $\varphi_1$ and $\varphi_2$ be the partitions admitted by $\cong$ on $U_1$ and $U_2$, respectively. Because both $U_1$ and $U_2$ contain $B$ and $B$ is not isomorphic to any other branch in $G$, it is the sole member of its equivalence class; as such, $\varphi = \varphi_1 \cap \varphi_2$ must contain $\{B\}$.
    \end{proof}

    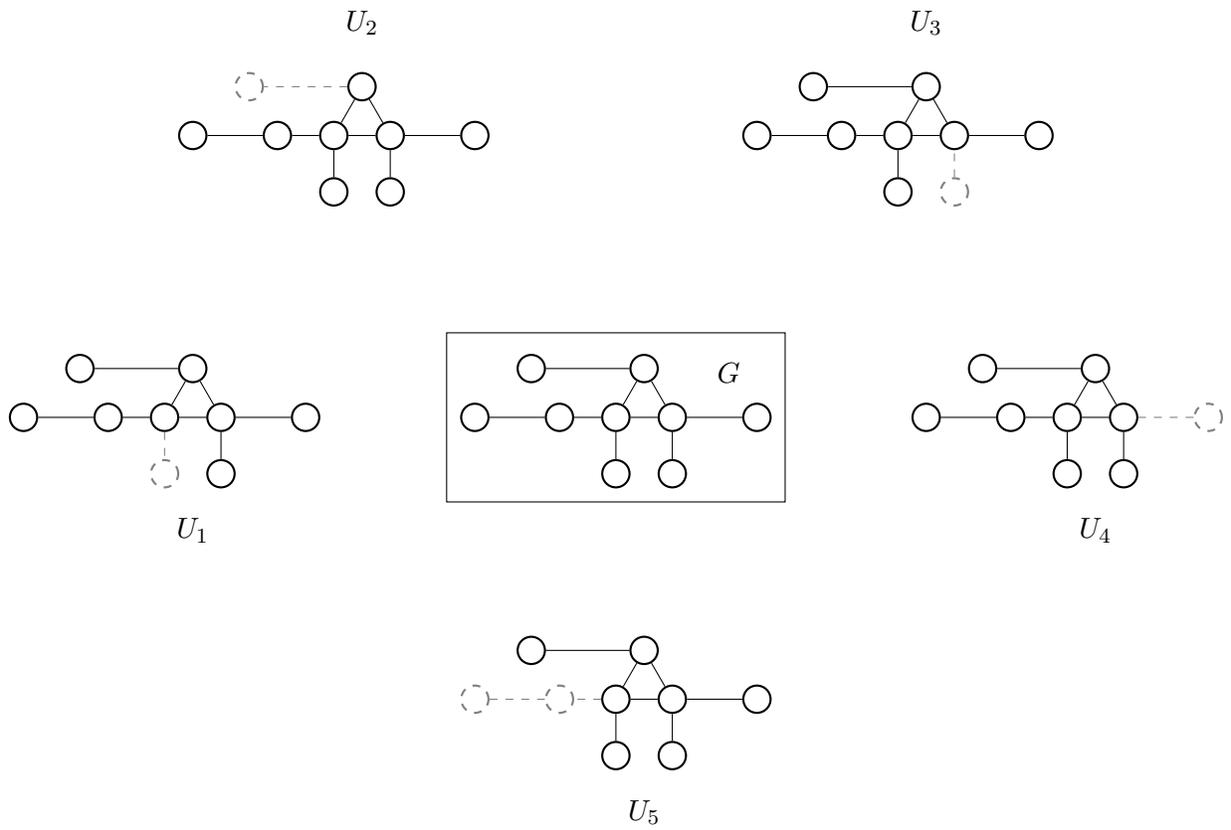
\begin{figure}
    \centering
    \begin{tikzpicture}[scale=0.75,every node/.style={circle,draw=black,fill=white,thick}]
        \begin{scope}[shift={(0,0)}]
            \node (v1) at (4,0) {};
              \node (v2) at (5,0) {};
              \node (v3) at (4.5,{sqrt(3)/2}) {};
            
              \draw (v1) -- (v2);
              \draw(v2) -- (v3);
              \draw (v3) -- (v1);
            
              \node (v4) at (4,-1) {}; 
              \draw (v1) -- (v4);
            
              \node (v5) at (5,-1) {};
              \node (v6) at (6.5,0) {};
              \node (v7) at (1.5,0) {};
              \node (v8) at (3,0) {};
              \node (v9) at (2.5,{sqrt(3)/2}) {};
            
              \draw (v5) -- (v2);
              \draw (v6) -- (v2);
              \draw (v3) -- (v9);
              \draw (v1) -- (v8);
              \draw  (v7) -- (v8);
        \end{scope}

        \node[draw=none, fill=none] at (-3.5, -2) {$U_1$};
        \begin{scope}[shift={(-8, 0)}]
            \node (v1) at (4,0) {};
            \node (v2) at (5,0) {};
            \node (v3) at (4.5,{sqrt(3)/2}) {};
            
            \draw (v1) -- (v2);
            \draw (v2) -- (v3);
            \draw (v3) -- (v1);
            \node[draw=gray, dashed] (v4) at (4,-1) {}; 
            \draw[gray, dashed] (v1) -- (v4);
            
            \node (v5) at (5,-1) {};
            \node (v6) at (6.5,0) {};
            \node (v7) at (1.5,0) {};
            \node (v8) at (3,0) {};
            \node (v9) at (2.5,{sqrt(3)/2}) {};
            
            \draw (v5) -- (v2);
            \draw (v6) -- (v2);
            \draw (v3) -- (v9);
            \draw (v1) -- (v8);
            \draw (v7) -- (v8);
        \end{scope}

        \node[draw=none, fill=none] at (12.5, -2) {$U_4$};
        \begin{scope}[shift={(8,0)}]
            \node (v1) at (4,0) {};
              \node (v2) at (5,0) {};
              \node (v3) at (4.5,{sqrt(3)/2}) {};
            
              \draw (v1) -- (v2);
              \draw (v2) -- (v3);
              \draw (v3) -- (v1);
            
              \node (v4) at (4,-1) {}; 
              \draw (v1) -- (v4);
            
              \node (v5) at (5,-1) {};
              \node[draw=gray, dashed] (v6) at (6.5,0) {};
              \node (v7) at (1.5,0) {};
              \node (v8) at (3,0) {};
              \node (v9) at (2.5,{sqrt(3)/2}) {};
            
              \draw (v5) -- (v2);
              \draw[gray, dashed] (v6) -- (v2);
              \draw (v3) -- (v9);
              \draw (v1) -- (v8);
              \draw (v7) -- (v8);
        \end{scope}

        \node[draw=none, fill=none] at (-0.5, 7) {$U_2$};
        \begin{scope}[shift={(-5,5)}]
            \node (v1) at (4,0) {};
              \node (v2) at (5,0) {};
              \node (v3) at (4.5,{sqrt(3)/2}) {};
            
              \draw (v1) -- (v2);
              \draw(v2) -- (v3);
              \draw (v3) -- (v1);
            
              \node (v4) at (4,-1) {}; 
              \draw (v1) -- (v4);
            
              \node (v5) at (5,-1) {};
              \node (v6) at (6.5,0) {};
              \node (v7) at (1.5,0) {};
              \node (v8) at (3,0) {};
              \node[draw=gray, dashed] (v9) at (2.5,{sqrt(3)/2}) {};
            
              \draw (v5) -- (v2);
              \draw (v6) -- (v2);
              \draw[gray, dashed] (v3) -- (v9);
              \draw (v1) -- (v8);
              \draw  (v7) -- (v8);
        \end{scope}

        \node[draw=none, fill=none] at (9.5, 7) {$U_3$};
        \begin{scope}[shift={(5,5)}]
            \node (v1) at (4,0) {};
              \node (v2) at (5,0) {};
              \node (v3) at (4.5,{sqrt(3)/2}) {};
            
              \draw (v1) -- (v2);
              \draw(v2) -- (v3);
              \draw (v3) -- (v1);
            
              \node (v4) at (4,-1) {}; 
              \draw (v1) -- (v4);
            
              \node[draw=gray, dashed] (v5) at (5,-1) {};
              \node (v6) at (6.5,0) {};
              \node (v7) at (1.5,0) {};
              \node (v8) at (3,0) {};
              \node (v9) at (2.5,{sqrt(3)/2}) {};
            
              \draw[gray, dashed] (v5) -- (v2);
              \draw (v6) -- (v2);
              \draw (v3) -- (v9);
              \draw (v1) -- (v8);
              \draw (v7) -- (v8);
        \end{scope}

        \node[draw=none, fill=none] at (4.5, -7) {$U_5$};
        \begin{scope}[shift={(0,-5)}]
            \node (v1) at (4,0) {};
              \node (v2) at (5,0) {};
              \node (v3) at (4.5,{sqrt(3)/2}) {};
            
              \draw (v1) -- (v2);
              \draw(v2) -- (v3);
              \draw (v3) -- (v1);
            
              \node (v4) at (4,-1) {}; 
              \draw (v1) -- (v4);
            
              \node (v5) at (5,-1) {};
              \node (v6) at (6.5,0) {};
              \node[draw=gray, dashed] (v7) at (1.5,0) {};
              \node[draw=gray, dashed] (v8) at (3,0) {};
              \node (v9) at (2.5,{sqrt(3)/2}) {};
            
              \draw (v5) -- (v2);
              \draw (v6) -- (v2);
              \draw (v3) -- (v9);
              \draw[gray, dashed] (v1) -- (v8);
              \draw[gray, dashed] (v7) -- (v8);
        \end{scope}

        \draw (1, -1.5) rectangle (7, 1.5);
        \node[draw=none, fill=none] at (6, 0.8) {$G$};
    \end{tikzpicture}
    \caption{The unicyclic, one-edge-deleted subgraphs $\mathcal U = \{U_1, \dots, U_5\}$ of $G$.}
    \label{figure:deck}
\end{figure}
    \FloatBarrier

\subsection{Theorem}

    \begin{theorem}
        Suppose that $G$ is unicyclic and
        \begin{enumerate}[topsep=0pt, itemsep=0pt]
            \item $ \ucd(G)\geq5$;
            \item there are at least three unique branches with (pairwise) distinct roots.
        \end{enumerate}
        Then, $G$ is edge-reconstructable.
    \end{theorem}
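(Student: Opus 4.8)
The plan is to show that $\Deck(G)$ determines the cyclic pattern of rooted subtrees hanging off $C(G)$, using the three unique branches as rigid landmarks. Fix a graph $H$ with $\Deck(H)=\Deck(G)$; we want $G\cong H$. First recover the coarse invariants: from $\Deck(G)$ one reads off $|V(G)|$, $|E(G)|$ and the degree sequence by the standard edge-reconstruction arguments, and $\Deck(G)$ contains a connected card — by Lemma~\ref{lemma:unicyclic-graph-categories} deleting any edge of $C(G)$ yields a tree — so $H$ is connected too; since a unicyclic graph has exactly as many edges as vertices, $H$ is then unicyclic with the same parameters. By Lemma~\ref{lemma:unicyclic-graph-categories} the tree cards of $G$ are precisely the graphs $G-e$ with $e\in E(C(G))$, and the unique cycle of any non-tree card has the length $\ell:=|C(G)|$, so $\ell=|C(H)|$ is determined. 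Writing $C(G)=c_0c_1\cdots c_{\ell-1}$ and letting $S_i$ be the multiset of branches rooted at $c_i$, the graph $G$ is determined, up to the dihedral action on the index set, by the cyclic word $(S_0,\dots,S_{\ell-1})$; it therefore suffices to recover this word from $\Deck(G)$.

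By hypothesis~(2) and Lemma~\ref{lemma:identifiable-branches} we reconstruct three pairwise non-isomorphic unique branches $B_1,B_2,B_3$, rooted at distinct cycle vertices $v_1,v_2,v_3$. Because $B_1,B_2,B_3$ are pairwise non-isomorphic and sit at distinct positions, the cyclic word admits no nontrivial symmetry (a nontrivial rotation would fix three distinct cycle vertices, and a reflection would identify two of the $B_j$), so the $\ell$ tree cards $G-e$, $e\in E(C(G))$, are pairwise non-isomorphic. In each tree card $T$ the former cycle becomes a path $\Pi_T$ (the \emph{spine}) carrying every bag $S_i$, and $G$ is obtained from $T$ by adding an edge between the two endpoints of $\Pi_T$; using the reconstructed trees $B_1,B_2,B_3$ we locate $v_1,v_2,v_3$ on $\Pi_T$. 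The three pairwise $T$-distances among $v_1,v_2,v_3$ consist of the lengths of two of the three cyclic arcs cut out by $v_1,v_2,v_3$ together with their sum, so together with $\ell$ they determine the arc lengths $\alpha,\beta,\gamma$ (with $\alpha+\beta+\gamma=\ell$) and, for this card $T$, which of the three arcs contains the deleted edge.

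Now recover the bags. To read the bag-sequence along an arc $A$ with endpoints $v_i,v_j$, choose a tree card $T$ whose deleted edge lies in a \emph{different} arc — such a card exists and is identified by the previous step; then $A$ is intact and is the unique $T$-path from $v_i$ to $v_j$, and at each \emph{interior} vertex $c$ of $A$ the two edges of this path are the two cycle-edges at $c$, so the subtrees of $T$ hanging off $c$ away from $A$ are exactly the branches constituting $S_c$. Traversing the path from $v_i$ to $v_j$ thus reads these interior bags in their correct order. For an endpoint $v_j$, instead choose a tree card in which $v_j$ is an endpoint of the spine; this is recognisable because there $v_j$ has degree $d_G(v_j)-1$, a value attained for exactly two of the $\ell$ tree cards (namely the two that delete a cycle-edge at $v_j$), while $v_j$ has degree $d_G(v_j)$ in the remaining $\ell-2\geq 3$. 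In such a card $v_j$ has a single spine-direction, the subtree containing the other two anchors, and $S_{v_j}$ is everything else at $v_j$. Assembling the three arcs, with their bag-sequences, along the common anchors $v_1,v_2,v_3$ recovers $(S_0,\dots,S_{\ell-1})$ and hence $G$; therefore $G$ is determined by $\Deck(G)$ and $G\cong H$.

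The step I expect to demand the most care is locating $v_1,v_2,v_3$ inside the tree cards: an isomorphic copy of some $B_j$ could occur as a non-branch rooted subtree inside a larger branch, so each $v_j$ must be pinned down using that $B_j$ is genuinely a \emph{branch} of $G$, that exactly three such landmark roots lie on $C(G)$, and the arc-length consistency across all the tree cards. Two further points need separate, routine treatment: configurations in which two of $\alpha,\beta,\gamma$ coincide (so that "which arc contains the deleted edge" is determined only up to a transposition, and the arc chosen in the bag-reading step must be coordinated accordingly, the fully symmetric case $\alpha=\beta=\gamma$ being handled on its own), and the short cycles $\ell\in\{3,4\}$ — not excluded by the hypotheses and outside the scope of Lemma~\ref{lemma:identifiable-branches} — for which $C(G)$ is essentially determined by $\ell$ together with a single unique branch.
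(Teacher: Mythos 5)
Your route is genuinely different from the paper's (recover $\ell$, encode $G$ as a cyclic word of bags of rooted branches, and read that word off the \emph{tree} cards using $B_1,B_2,B_3$ as landmarks), but it founders on exactly the step you flag and then defer: locating $v_1,v_2,v_3$ (equivalently, the former cycle) inside a tree card. In a tree card the cycle has been destroyed, so "being a branch of $G$" is not visible there; a rooted subtree isomorphic to $B_j$ can occur inside another branch or as a terminal segment of the spine, and uniqueness of $B_j$ — which by definition only excludes isomorphic \emph{disjoint branches} of $G$ — gives no control over such copies. Everything downstream presupposes this identification: deciding which arc contains the deleted edge of a given card, choosing a card whose deleted edge avoids a prescribed arc, and the degree count $d_G(v_j)-1$ used to recognize "endpoint" cards are all statements about where $v_j$ sits in that card, so invoking them to pin $v_j$ down is circular. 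The auxiliary claim that the $\ell$ tree cards are pairwise non-isomorphic has the same flaw: an isomorphism between two tree cards need not carry former cycle to former cycle, so asymmetry of the cyclic word does not by itself forbid $G-e\cong G-e'$. The proposed remedy, "arc-length consistency across all the tree cards," might be developable, but as written it is a placeholder standing in for the heart of the proof rather than a routine verification.

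The paper sidesteps this difficulty by never working inside tree cards: via Lemma \ref{lemma:unicyclic-graph-categories} it passes to the unicyclic components $\mathcal U$ (one per branch, so at least $\ucd(G)\geq 5$ of them), in which the cycle — hence every branch and its root — remains visible and Lemma \ref{lemma:identifiable-branches} applies directly; by pigeonhole two members $U_1,U_2$ of $\mathcal U$ contain all three pairwise non-isomorphic landmarks, they each omit a different single branch, so $U_1\cup U_2=G$, and the landmark roots together with their arc distances fix the alignment when the two cards are glued. If you want to salvage your argument, the most economical repair is to read your cyclic word off the members of $\mathcal U$ instead of off the tree cards, which removes the anchor-location problem entirely. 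Finally, your deferred cases are not routine as stated: for $\ell\in\{3,4\}$ Lemma \ref{lemma:identifiable-branches} is unavailable, so the landmarks have not even been reconstructed and "determined by $\ell$ together with a single unique branch" is not yet an argument (to be fair, the paper's own proof also applies that lemma without securing its length-five hypothesis); and the tie cases among the arc lengths, which you also postpone, are exactly where a canonical alignment could fail, so they need an explicit treatment rather than a remark.
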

    
    \begin{proof}
        By Lemma \ref{lemma:identifiable-branches}, we can identify the unique branches $B_1, B_2, B_3$ of $G$; consider the set $\mathcal U$ of maximal unicyclic subgraphs of $G$ from Lemma \ref{lemma:identifiable-branches}. Because $\ucd(G)$ counts the number of branches, $\mathcal U$ is a collection of $\ucd(G)$ elements, each a unicyclic subgraph of $G$ obtained by deleting exactly one branch of $G$. Then, $\mathcal U$ contains at least five subgraphs $U_1, \dots, U_5$. By the pigeonhole principle, at least two of the $U_i$ have $B_1, B_2, B_3$ as subgraphs; without loss of generality, let $U_1$ and $U_2$ contain $B_1, B_2,$ and $B_3$. There are two branches $\Gamma_1, \Gamma_2$ of $G$ such that $U_1$ contains $\Gamma_1$ but not $\Gamma_2$ and $U_2$ contains $\Gamma_2$ but not $\Gamma_1$, implying that $U_1 \cup U_2 = G$.
    
        Let $x_i$ be the root of $B_i$ for $i \in \{1,2,3\}$. Then, for $i \neq j$ there exist two paths from $x_i$ and $x_j$ on $C(G)$. As there are three roots, there is at least one pair of roots (without loss of generality, $x_1$ and $x_2$) such that one of these paths is longer than the other.
    
        \begin{algorithm}[hbt!]
        \begin{algorithmic}
            \State $\widetilde{G} \coloneqq B_1$
            \vspace{1em}

            \For{each $i$ in $\{1, \dots, |S|-1\}$}
                \State $E_{\widetilde{G}} \coloneqq  E_{\widetilde{G}} \cup (S_i, S_{i+1}) $ \Comment{{\footnotesize Add the $i$\textsuperscript{th} edge to our partial reconstruction $\widetilde G$.}}
                \State $\widetilde{G} \coloneqq \widetilde{G} \cup A^S_i$
            \EndFor
            \vspace{1em}
            \For{each $j$ in $\{1, \dots, |L|-1\}$}
                \State $E_{\widetilde{G}} \coloneqq E_{\widetilde{G}} \cup (L_j, L_{j+1}) $
                \State $\widetilde{G} \coloneqq \widetilde{G} \cup A^L_j$
            \EndFor
        \end{algorithmic}
        \caption{Let $S = (x_1, \dots, x_2)$ and $L = (x_1, \dots, x_2)$ be the sequences of vertices which define the respective shortest and longest paths on $C(G)$ from $x_1$ to $x_2$. For any vertex $S_i$ (resp. $L_j$) let $A^S_i$ (resp. $A^L_j$) be the union of all branches of $G$ which contain $S_i$ --- that is, $A^S_i$ is the union of all branches of $S_i$ in $U_1$ and $U_2$.}
    \end{algorithm}
    
        The above algorithm constructs $\widetilde G = U_1 \cup U_2$ which, by the previous paragraph, is $G$.
        \end{proof}

    \section*{Acknowledgements}
        The authors express their thanks to Drs. Geir Agnarsson and Harbir Antil for stimulating conversations and robust critiques.
    
    \newpage
    \printbibliography
\end{document}